\title{\bf A characterization of the rate of change of $\Phi$-entropy via
an integral form curvature-dimension condition}
\author{Dejun Luo\footnote{Email: luodj@amss.ac.cn. Partly supported by the Key Laboratory of
RCSDS, CAS (2008DP173182), NSFC (11371099) and AMSS (Y129161ZZ1).}
\vspace{3mm}\\
{\footnotesize Institute of Applied Mathematics, Academy of Mathematics and Systems Science,}\\
{\footnotesize Chinese Academy of Sciences, Beijing 100190, China}
}
\date{}
\def\E{\mathbb{E}}
\def\R{\mathbb{R}}
\def\I{\mathcal{I}}
\def\d{\textup{d}}
\def\Ric{\textup{Ric}}
\def\Hess{\textup{Hess}}
\def\Ent{\textup{Ent}}
\def\II{{\rm I\!I}}
\def\<{\langle}
\def\>{\rangle}
\newtheorem{theorem}{Theorem}[section]
\newtheorem{lemma}[theorem]{Lemma}
\newtheorem{corollary}[theorem]{Corollary}
\newtheorem{proposition}[theorem]{Proposition}
\newtheorem{remark}[theorem]{Remark}
\newtheorem{example}[theorem]{Example}
\begin{document}

\maketitle
\makeatletter 
\renewcommand\theequation{\thesection.\arabic{equation}}
\@addtoreset{equation}{section}
\makeatother 

\vspace{-6mm}

\begin{abstract}
Let $M$ be a compact Riemannian manifold without boundary and $V:M\to \R$ a smooth function.
Denote by $P_t$ and $\d\mu=e^V\,\d x$ the semigroup and symmetric measure of the second order
differential operator $L=\Delta+\nabla V\cdot\nabla$. For some suitable convex function
$\Phi:\I\to\R$ defined on an interval $\I$, we consider the $\Phi$-entropy of $P_t f$ (with
respect to $\mu$) for any $f\in C^\infty(M,\I)$. We show that an integral form curvature-dimension
condition is equivalent to an estimate on the rate of change of the $\Phi$-entropy.
We also generalize this result to bounded smooth domains of a complete Riemannian manifold.
\end{abstract}

{\bf MSC2010:} 58J35

{\bf Key words:} Heat equation, $\Phi$-entropy, curvature-dimension condition, second
fundamental form, reflecting diffusion semigroup

\section{Introduction and main results}

Let $M$ be an $n$-dimensional compact Riemannian manifold without boundary. The compactness
of the manifold makes it much easier to differentiate under the integral sign and to apply the
integration by parts formula. It also ensures the entropies defined below are always finite,
which allows us to avoid technical difficulties. Denote by $\Delta$ the Laplacian--Beltrami operator and
$\nabla$ the gradient operator, respectively. Consider the diffusion operator
  $$L=\Delta+\nabla V\cdot\nabla,$$
where $V\in C^\infty(M)$. Subtracting a constant from $V$ if necessary, we may assume $\d\mu=
e^{V(x)}\,\d x$ is a probability measure. It is well known that $L$ is symmetric with respect to $\mu$:
  $$\int_M fLg\,\d\mu=\int_M gLf\,\d\mu,\quad \mbox{for all } f,g\in C^2(M).$$
Let $P_t=e^{tL}$ be the heat semigroup associated to $L$. Then for any $f\in C^2(M)$,
  $$\frac{\partial}{\partial t}P_tf=P_tL f=L P_tf.$$

Now we define the ``carr\'e du champ'' operator associated to $L$ which was introduced in
\cite{BakryEmery}: for $f,g\in C^2(M)$,
  $$\Gamma(f,g)=\frac12\big(L(fg)-fLg-g Lf\big)=\nabla f\cdot\nabla g$$
and
  \begin{align*}
  \Gamma_2(f,g)&=\frac12\big[L\Gamma(f,g)-\Gamma(Lf,g)-\Gamma(f,Lg)\big]\\
  &=\<\Hess_f,\Hess_g\>_{HS}+\big(\Ric-\Hess_V\big)(\nabla f,\nabla g),
  \end{align*}
where $\Hess$ and $\Ric$ are respectively the Hessian tensor and the Ricci curvature tensor, and
$\<\, ,\>_{HS}$ denotes the inner product of matrices corresponding to the Hilbert--Schmidt norm.
To simplify notations, we shall write
  $$\Gamma(f)=\Gamma(f,f)\quad \mbox{and}\quad \Gamma_2(f)=\Gamma_2(f,f).$$

We can now present the well known curvature-dimension condition due to Bakry and \'Emery.
The operator $L$ is said to satisfy the curvature-dimension condition $CD(K,m)$ for
some $K\in\R$ and $m>0$ if
  \begin{equation}\label{CDC}
  \Gamma_2(f)\geq K\Gamma(f)+\frac1m (Lf)^2\quad  \mbox{for any } f\in C^\infty(M).
  \end{equation}
It is equivalent to
  $$m\geq n\quad \mbox{and} \quad (m-n)[\Ric-\Hess_V -K]\geq \nabla V\otimes\nabla V.$$
Recently, Baudoin and Garofalo proposed in \cite{Baudoin} a generalized curvature-dimension
condition, allowing us to deal with the sub-elliptic operators (see also Wang \cite{Wang12}).
We mention that a number of semigroup properties which are equivalent to \eqref{CDC}
have been found by F.Y. Wang in \cite[Theorem 1.1]{Wang11}, see also \cite[Proposition 3.3]{Bakry}
or \cite[Theorem 2.3.3]{Wang-book} for the case where $m=\infty$. In the setting of metric
measure spaces, Ambrosio et al. introduced in \cite{Ambrosio} a weak version of the
Bakry--\'Emery curvature-dimension condition which coincides with \eqref{CDC} if the space is smooth.
The second order symmetric covariant tensor field $\Ric-\Hess_V$ is called the Bakry--\'Emery Ricci
tensor, for which the prescribing curvature problem (i.e., finding a metric such that its Bakry--\'Emery Ricci curvature fulfills some prescribed properties) was shown to be solvable in the conformal
class if the initial Bakry--\'Emery Ricci tensor belongs to a negative cone, see [20] for details.

Notice that the condition \eqref{CDC} is a pointwise inequality in the sense that, for any
given $f\in C^\infty(M)$, it holds for all $x\in M$. When $K>0$, it was proved in
\cite[Corollaire 1, p.199]{BakryEmery} that the following integral form condition
  $$\int_M e^f \Gamma_2(f)\,\d\mu\geq K\int_M e^f\Gamma(f)\,\d\mu \quad \mbox{for all } f\in C^\infty(M)$$
implies that the probability measure $\mu$ satisfies the log-Sobolev inequality.
On the other hand, the inverse implication does not hold:
one can find in \cite[Example 5.5.7]{Ane} a measure $\mu$ which fulfills the log-Sobolev inequality
but dissatisfies the above integral inequality. Our purpose is to show that such an integral condition is
in fact equivalent to an estimate on the rate of decay of the relative entropy
of solutions to heat equations corresponding to $L$.

We first introduce some notations. Let $\Phi:\I\to\R$ be a smooth convex function
defined on an interval $\I\subset\R$, such that $\Phi''$ and $-1/\Phi''$ are also convex.
Typical examples are $\Phi(x)=x\log x$ and $\Phi(x)=\frac{x^p-x}{p(p-1)} \, (1<p\leq 2)$
on $\I=\R_+$, and $\Phi(x)=x^2$ on $\R$.

Fix any $f\in C^\infty(M)$ taking values in $\I$. We define the $\Phi$-entropy of $f$ as follows:
  $$\Ent_\Phi(f)=-\int_M \Phi(f)\,\d\mu.$$
The readers can find in \cite{Chafai} a comprehensive study of the $\Phi$-entropy and its
relation with the convexity and functional inequalities. We mention that this framework is not
restricted to the diffusion operators $L$, but it also works in the jump case.
For instance, by exploring the martingale representation approach, a new modified
log-Sobolev inequality for a Poisson space has been obtained earlier in \cite{Wu}, which includes several
known inequalities as special cases. Similar inequalities are established recently in \cite{Wang14}
for a class of stochastic differential equations driven by purely jump L\'evy processes, based on
the $\Phi$-entropy inequality derived in \cite{Wu, Chafai}. In the current paper, however,
we shall concentrate on the diffusion case.

By the integration by parts formula, the rate of change of the $\Phi$-entropy $\Ent_\Phi(P_t f)$
is expressed by
  $$\frac{\d}{\d t}\Ent_\Phi(P_t f)=\int_M \Phi''(P_tf)\Gamma(P_t f)\,\d\mu\geq 0$$
since $\Phi$ is convex. When $L=\Delta$ (i.e. $V\equiv 0$) and $\Phi(x)=x\log x$, an asymptotic
estimate on the rate of change of $\Ent_\Phi(P_t f)$ was given in \cite[Theorem 1.1]{LimLuo}, provided
that the Ricci curvature satisfies $\Ric\geq K\in\R$. This work was motivated by L. Ni's
papers \cite{Ni04a, Ni04b}, where the author derived the formula for the time derivative of Perelman's
$\mathcal W$-entropy along solutions to the linear heat equation. Ni's results were recently extended
by X.-D. Li \cite{LiXD} to the Witten Laplacian operator on complete Riemannian manifolds,
under suitable Bakry--\'Emery curvature-dimension conditions. In this framework, B. Qian \cite{Qian}
obtained similar estimates as those in \cite{LimLuo}
on the general $\Phi$-entropy under the condition \eqref{CDC}.

To state the main result of this note, we need two more notations:
  $$q_\Phi(f)=\int_M\Phi''(f)\Gamma(f)\,\d\mu\quad \mbox{and} \quad
  C_\Phi(f)=\int_M f^2\Phi''(f)\,\d\mu.$$

\begin{theorem}\label{equivalence}
Fix $K\in\R$ and $m>0$. Then for any $f\in C^\infty(M)$ taking values in $\I$,
  \begin{equation}\label{rate-of-change}
  q_\Phi(P_t f)\leq e^{-2Kt}\bigg[\frac1{q_\Phi(f)}
  +\frac{1-e^{-2Kt}}{mKC_\Phi(f)}\bigg]^{-1}  \quad \mbox{for all } t>0
  \end{equation}
if and only if the following integral form curvature-dimension condition holds: for all
$f\in C^\infty(M,\I)$,
  \begin{equation}\label{integral-CDC}
  \int_M \bigg[\frac{2\,\Gamma_2(\Phi'(f))}{\Phi''(f)}
  -\bigg(\frac1{\Phi''}\bigg)''(f)\big(\Phi''(f)\Gamma(f)\big)^2\bigg]\d\mu
  \geq 2Kq_\Phi(f)+\frac{2(q_\Phi(f))^2}{m C_\Phi(f)}.
  \end{equation}
\end{theorem}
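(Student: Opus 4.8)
\medskip
\noindent\textbf{Proof strategy.}
The plan is to show that both conditions are equivalent to a single first-order differential (in)equality for the function $\Lambda(t):=q_\Phi(P_tf)=\frac{\d}{\d t}\Ent_\Phi(P_tf)$. The linchpin of the whole argument, and the step I expect to be the main obstacle, is the dissipation identity
\begin{equation*}
\Lambda'(t)=-\int_M\bigg[\frac{2\,\Gamma_2(\Phi'(u))}{\Phi''(u)}-\Big(\frac1{\Phi''}\Big)''(u)\big(\Phi''(u)\Gamma(u)\big)^2\bigg]\d\mu,\qquad u:=P_tf,
\end{equation*}
which asserts that $\Lambda'(t)$ is precisely the negative of the integrand on the left-hand side of \eqref{integral-CDC} evaluated at $P_tf$. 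To derive it I would differentiate $\Lambda(t)=\int_M\Phi''(u)\Gamma(u)\,\d\mu$ in $t$, using $\partial_t u=Lu$, $\partial_t\Gamma(u)=2\Gamma(u,Lu)$, the chain rules $L\phi(u)=\phi'(u)Lu+\phi''(u)\Gamma(u)$ and $\Gamma(\phi(u))=\phi'(u)^2\Gamma(u)$, and repeated integration by parts (legitimate since $M$ is compact without boundary), finally invoking the definition of $\Gamma_2$. The delicate point is the bookkeeping that collects all the lower-order terms into exactly the coefficient $(1/\Phi'')''(u)$; this is where the hypothesis that $-1/\Phi''$ is convex makes the extra term nonnegative. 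I would sanity-check the identity on $\Phi(x)=x^2$, where it must reduce to the classical $\Lambda'(t)=-4\int_M\Gamma_2(u)\,\d\mu$.

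A second ingredient is the monotonicity $C_\Phi(P_tf)\le C_\Phi(f)$. Differentiating gives $\frac{\d}{\d t}C_\Phi(P_tf)=-\int_M G''(u)\,\Gamma(u)\,\d\mu$ with $G(x)=x^2\Phi''(x)$, so it suffices that $G$ be convex. Writing $a=\Phi''>0$, one has $G''=a''x^2+4a'x+2a$; as a quadratic in $x$ its leading coefficient satisfies $a''\ge0$ (because $\Phi''$ is convex), while its discriminant is $16(a')^2-8a''a=8\big(2(a')^2-a''a\big)\le0$, since $-1/\Phi''$ convex is exactly $(1/a)''=\big(2(a')^2-a''a\big)/a^3\le0$. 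Hence $G''\ge0$ and $C_\Phi$ is nonincreasing along the flow. Both structural hypotheses on $\Phi$ are thus used precisely here (and in the sign of the extra term above).

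For the implication \eqref{integral-CDC}$\Rightarrow$\eqref{rate-of-change}, I would apply \eqref{integral-CDC} to $u=P_tf$, which lies in $C^\infty(M,\I)$ by the maximum principle, and combine it with the dissipation identity to get $-\Lambda'(t)\ge 2K\Lambda(t)+\frac{2\Lambda(t)^2}{mC_\Phi(P_tf)}\ge 2K\Lambda(t)+\frac{2\Lambda(t)^2}{mC_\Phi(f)}$, the last step using the monotonicity. Assuming $f$ nonconstant (the constant case being trivial, with both sides vanishing under the convention $1/q_\Phi(f)=\infty$), we have $\Lambda(t)>0$ for all $t$, so $w:=1/\Lambda$ obeys the linear differential inequality $w'\ge 2Kw+\frac2{mC_\Phi(f)}$; integrating $\big(e^{-2Kt}w\big)'\ge\frac2{mC_\Phi(f)}e^{-2Kt}$ from $0$ to $t$ and inverting yields exactly \eqref{rate-of-change}. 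One checks that equality in this ODE reproduces the right-hand side of \eqref{rate-of-change}.

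For the converse \eqref{rate-of-change}$\Rightarrow$\eqref{integral-CDC}, note that at $t=0$ both sides of \eqref{rate-of-change} equal $q_\Phi(f)$, while for $t>0$ the left side does not exceed the right; hence the right-derivative at $0$ of the right-hand side dominates $\Lambda'(0)$. The left-hand side of \eqref{rate-of-change} is smooth in $t$ with $\Lambda'(0)$ given by the dissipation identity at $t=0$ (i.e. $-1$ times the integrand of \eqref{integral-CDC} at $f$), and the explicit right-hand side, which solves $-y'=2Ky+\frac{2y^2}{mC_\Phi(f)}$ with $y(0)=q_\Phi(f)$, has $t$-derivative $-2Kq_\Phi(f)-\frac{2q_\Phi(f)^2}{mC_\Phi(f)}$ at $t=0$. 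Comparing these two derivatives yields \eqref{integral-CDC} for $f$; since $f\in C^\infty(M,\I)$ is arbitrary, the integral condition holds. The whole scheme therefore hinges on the dissipation identity, the only computationally heavy step; the ODE comparison and the convexity lemma are then routine.
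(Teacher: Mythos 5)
Your proposal is correct and takes essentially the same route as the paper: your dissipation identity is exactly the integrated form of the paper's Lemma \ref{2-lem} (i.e.\ equation \eqref{2.1}), your monotonicity $C_\Phi(P_tf)\le C_\Phi(f)$ is the paper's \eqref{C-Phi} (obtained there by Jensen's inequality from the convexity of $x\mapsto x^2\Phi''(x)$, which your discriminant computation verifies directly instead of citing), and your derivative-at-$t=0$ comparison coincides with the paper's first proof of the ``only if'' part. The only cosmetic differences are that you derive the dissipation identity directly in integrated form rather than via the pointwise identity, and that you prove rather than quote the convexity of $x^2\Phi''(x)$.
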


We remark that when $K=0$, the right hand side of \eqref{rate-of-change} is understood to be
the limit as $K\to0$, thus
  $$q_\Phi(P_t f)\leq \bigg[\frac1{q_\Phi(f)}+\frac{2t}{mC_\Phi(f)}\bigg]^{-1}
  =\frac{mq_\Phi(f)C_\Phi(f)}{m C_\Phi(f)+ 2t q_\Phi(f)}.$$
At first glance, the inequalities \eqref{rate-of-change} and \eqref{integral-CDC} look a
little complicated, thus we first give some examples and remarks to help understand them.

\begin{example}\label{example-1}
{\rm We consider the following three cases of $\Phi$-entropies.
\begin{itemize}
\item[(i)] Relative entropy: $\Phi(x)=x\log x,\,x\in(0,\infty)$. In this case, we have
  $$q_\Phi (f)=\int_M \frac{\Gamma(f)}f\,\d\mu=\int_M f\,\Gamma(\log f)\,\d\mu,
  \quad C_\Phi(f)=\int_M f\,\d\mu=\mu(f).$$
Noting that $\big(\frac1{\Phi''}\big)''(x)=0$, the inequality \eqref{integral-CDC} becomes
  \begin{equation}\label{case-1.1}
  \int_M f\,\Gamma_2(\log f)\,\d\mu\geq K\int_M f\,\Gamma(\log f)\,\d\mu
  +\frac1{m\mu(f)} \bigg(\int_M f\,\Gamma(\log f)\,\d\mu\bigg)^2,
  \end{equation}
which can be rewritten as
  \begin{equation}\label{case-1.2}
  \int_M e^f\Gamma_2(f)\,\d\mu\geq K\int_M e^f\Gamma(f)\,\d\mu
  +\frac1{m\mu(e^f)} \bigg(\int_M e^f\Gamma(f)\,\d\mu\bigg)^2.
  \end{equation}

\item[(ii)] Variance: $\Phi(x)=x^2,\,x\in\R$. It holds that
  $$q_\Phi (f)=2\int_M \Gamma(f)\,\d\mu,\quad
  C_\Phi(f)=2\int_M f^2\,\d\mu=2\mu(f^2).$$
Again one has $\big(\frac1{\Phi''}\big)''(x)=0$, thus the inequality \eqref{integral-CDC} becomes
  \begin{equation}\label{case-2}
  \int_M \Gamma_2(f)\,\d\mu\geq K\int_M \Gamma(f)\,\d\mu
  +\frac1{m\mu(f^2)} \bigg(\int_M \Gamma(f)\,\d\mu\bigg)^2.
  \end{equation}

\item[(iii)] Interpolation between the above two cases: for some $1<p\leq 2$,
$\Phi(x)=\frac{x^p-x}{p(p-1)},\,x\in(0,\infty)$. Noticing that $\Phi''(x)=x^{p-2}$, we have
  $$q_\Phi (f)=\int_M f^{p-2}\Gamma(f)\,\d\mu=\int_M f^p\Gamma(\log f)\,\d\mu,\quad
  C_\Phi(f)=\int_M f^2f^{p-2}\,\d\mu=\mu(f^p).$$
Moreover, as $\big(\frac1{\Phi''}\big)''(x)=(2-p)(1-p)x^{-p}$, it follows that \eqref{integral-CDC} becomes
  \begin{equation}\label{case-3}
  \begin{split}
  &\hskip13pt\int_M \frac{f^{2-p}}{(p-1)^2}\Gamma_2(f^{p-1})\,\d\mu
  +\frac{(2-p)(p-1)}2\int_M f^p\Gamma(\log f)^2\,\d\mu\\
  &\geq K\int_M f^p\Gamma(\log f)\,\d\mu
  +\frac1{m\mu(f^p)} \bigg(\int_M f^p\Gamma(\log f)\,\d\mu\bigg)^2.
  \end{split}
  \end{equation}

\end{itemize}}
\end{example}

\begin{remark}\label{remark-1} {\rm
In the case that $m=\infty$, the estimate \eqref{rate-of-change} becomes
$q_\Phi(P_t f)\leq e^{-2Kt} q_\Phi(f)$. Integrating this inequality from $0$ to $t$ yields
  $$\int_M \Phi(f)\,\d\mu-\int_M \Phi(P_t f)\,\d\mu\leq \frac{1-e^{-2Kt}}{2K}\int_M \Phi''(f)\Gamma(f)\,\d\mu.$$
We have $P_tf\to \mu(f)$ as $t$ tends to $\infty$ since the manifold is compact. If $K>0$, then letting
$t\to \infty$ leads to the $\Phi$-Sobolev inequality (cf. the proof of
\cite[Proposition 5, p.198]{BakryEmery}):
  \begin{equation}
  \int_M \Phi(f)\,\d\mu-\Phi(\mu(f))\leq \frac1{2K}\int_M \Phi''(f)\Gamma(f)\,\d\mu.
  \end{equation}
Corresponding to the three cases in Example \ref{example-1}, the $\Phi$-Sobolev inequalities
take the following forms:
\begin{itemize}
\item[(i)] Log-Sobolev inequality: for all $f\in C^\infty(M,\R_+)$,
  $$\int_M f\log\frac f{\mu(f)}\,\d\mu\leq \frac1{2K}\int_M \frac{|\nabla f|^2}f\,\d\mu$$
which, by changing $f$ into $f^2$, becomes
  $$\int_M f^2\log\frac{f^2}{\mu(f^2)}\,\d\mu\leq \frac2{K}\int_M |\nabla f|^2\,\d\mu.$$
\item[(ii)] Poincar\'e inequality:
  \begin{equation}\label{PI}
  {\rm Var}_\mu(f):=\int_M f^2\,\d\mu-\mu(f)^2\leq \frac1K\int_M |\nabla f|^2\,\d\mu.
  \end{equation}
\item[(iii)] Interpolation between the above two inequalities:
  $$\mu(f^p)-\mu(f)^p\leq\frac{p(p-1)}{2K}\int_M f^p|\nabla \log f|^2\,\d\mu.$$
\end{itemize}
}
\end{remark}

The next result asserts that the pointwise curvature-dimension
condition \eqref{CDC} implies the integral form condition \eqref{integral-CDC}.

\begin{proposition}\label{prop-1}
The integral form curvature-dimension condition \eqref{integral-CDC} is a consequence of
the pointwise curvature-dimension condition \eqref{CDC}.
\end{proposition}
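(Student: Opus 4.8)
The plan is to deduce \eqref{integral-CDC} from \eqref{CDC} by feeding the pointwise condition a single well-chosen test function and then correcting the two terms that do not immediately match. Since $f$ takes values in $\I$ and $\Phi$ is smooth there, the function $g=\Phi'(f)$ lies in $C^\infty(M)$, so I may apply \eqref{CDC} to $g$. Recalling $\Phi''>0$ on the interior of $\I$ (as in all three examples), I divide the resulting pointwise inequality by $\Phi''(f)$, multiply by $2$, and integrate against $\mu$ to obtain
  $$\int_M \frac{2\,\Gamma_2(\Phi'(f))}{\Phi''(f)}\,\d\mu
  \geq 2K\int_M\frac{\Gamma(\Phi'(f))}{\Phi''(f)}\,\d\mu
  +\frac2m\int_M\frac{(L\Phi'(f))^2}{\Phi''(f)}\,\d\mu.$$
The chain rule gives $\Gamma(\Phi'(f))=\big(\Phi''(f)\big)^2\Gamma(f)$, so $\Gamma(\Phi'(f))/\Phi''(f)=\Phi''(f)\Gamma(f)$ and the middle term collapses to exactly $2Kq_\Phi(f)$, matching the first term on the right of \eqref{integral-CDC}.

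Next I would control the $m$-term. The key observation is the integration-by-parts identity $q_\Phi(f)=\int_M\Phi''(f)\Gamma(f)\,\d\mu=\int_M\Gamma(f,\Phi'(f))\,\d\mu=-\int_M f\,L\Phi'(f)\,\d\mu$. Writing the integrand as $\big(f\sqrt{\Phi''(f)}\big)\cdot\big(L\Phi'(f)/\sqrt{\Phi''(f)}\big)$ and applying the Cauchy--Schwarz inequality in $L^2(\mu)$ yields
  $$q_\Phi(f)^2\leq \bigg(\int_M f^2\Phi''(f)\,\d\mu\bigg)
  \bigg(\int_M\frac{(L\Phi'(f))^2}{\Phi''(f)}\,\d\mu\bigg)
  =C_\Phi(f)\int_M\frac{(L\Phi'(f))^2}{\Phi''(f)}\,\d\mu,$$
hence $\frac2m\int_M (L\Phi'(f))^2/\Phi''(f)\,\d\mu\geq \frac{2q_\Phi(f)^2}{mC_\Phi(f)}$, which is precisely the second term on the right of \eqref{integral-CDC}.

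It then remains to account for the correction term $-\int_M\big(\tfrac1{\Phi''}\big)''(f)\,(\Phi''(f)\Gamma(f))^2\,\d\mu$ appearing on the left of \eqref{integral-CDC}. Here I would invoke the standing hypothesis that $-1/\Phi''$ is convex, i.e. $\big(\tfrac1{\Phi''}\big)''\leq 0$ on $\I$; since $(\Phi''(f)\Gamma(f))^2\geq 0$ pointwise, this correction term is nonnegative and therefore only helps the inequality. Combining the three displays — the integrated $CD(K,m)$ estimate, the Cauchy--Schwarz lower bound, and the favorable sign of the correction term — gives \eqref{integral-CDC}. The argument is essentially routine once the test function $g=\Phi'(f)$ is fixed; the step most worth getting right is the bookkeeping that identifies the precise role of each structural hypothesis on $\Phi$, in particular recognizing that the convexity of $-1/\Phi''$ is exactly what guarantees the correction term has the sign needed to be discarded.
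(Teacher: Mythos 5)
Your proposal is correct and follows essentially the same route as the paper's own proof: both apply the pointwise $CD(K,m)$ condition to the test function $\Phi'(f)$, divide by $\Phi''(f)>0$ and integrate, bound the dimension term via exactly the same Cauchy--Schwarz argument based on the integration-by-parts identity $q_\Phi(f)=-\int_M f\,L\Phi'(f)\,\d\mu$, and discard the correction term using the convexity of $-1/\Phi''$. There is nothing to add.
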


As mentioned at the beginning of this section, we assume the manifold is compact to
facilitate the applications of differentiation under the integral sign and of integration
by parts formula. If these are justified, then our results also hold on non-compact manifolds.
In the one dimensional special case, we can provide an example which fulfills the integral form
curvature-dimension condition \eqref{integral-CDC} with a better constant than the one in the
pointwsie inequality \eqref{CDC}.

\begin{example}\label{exa-2} {\rm
Let $V\in C^2(\R^1,\R^1)$ be a concave even function satisfying
  $$V''(x)\begin{cases}
  =0, & \mbox{for all } x\in[0,1];\\
  \leq -1, & \mbox{for all } x \geq 2.
  \end{cases}$$
Then there exists $K>0$ such that \eqref{case-2} holds with $m=\infty$.
Since the Ricci curvature vanishes in this case, it is impossible to find a positive constant $K$ such that the pointwise
curvature-dimension condition \eqref{CDC} holds. }
\end{example}

We mention that by Remark \ref{remark-1}(ii), the measure $\d\mu=e^{V(x)}\,\d x$ with $V$
given in the above example satisfies the Poincar\'e inequality \eqref{PI}. On the other hand,
by \cite[Theorem 1.2]{Wang01}, the measure $\mu$ even fulfills the stronger log-Sobolev inequality.
For the moment, however, we are unable to prove that \eqref{case-1.2} holds with some $K>0$ and
$m=\infty$ (which will result in the log-Sobolev inequality for $\mu$).
In \cite[Example 5.5.7]{Ane}, it is shown that if $V(x)=-\alpha(x^4-2x^2)$ and $f(x)=-3\alpha x^2$,
then $\int_{\R}e^f\Gamma_2(f)\,\d\mu<0$ for $\alpha$ big enough. Consequently, there does not exist
$K>0$ such that \eqref{case-1.2} holds with $m=\infty$.

The rest of this paper is organized as follows. We present in Section 2 the proofs of Theorem
\ref{equivalence}, Proposition \ref{prop-1} and Example \ref{exa-2}.
In Subsection 3.1, we extend Theorem \ref{equivalence} to a bounded domain with smooth boundary.
Finally, as an application of the general results, we consider in Subsection 3.2 the measure
having the square of the ground state of a Schr\"odinger operator as density function, and
provide an explicit estimate on the time derivative of the $\Phi$-entropy.

\section{Proofs of the main results}

This section is devoted to proving the results stated in Section 1. The proof of Theorem
\ref{equivalence} consists of two parts: the sufficiency and necessity of \eqref{integral-CDC}.
Both of them are dependent on the following equality.

\begin{lemma}\label{2-lem}
Let $P_t$ be the semigroup generated by $L=\Delta+\nabla V\cdot\nabla$. Then for any
$f\in C^\infty(M,\I)$, we have
  \begin{equation}\label{2-lem.1}
  \bigg(L-\frac{\partial}{\partial t}\bigg)\big[\Phi''(P_t f)\Gamma(P_t f)\big]
  =\frac{2\,\Gamma_2(\Phi'(P_t f))}{\Phi''(P_t f)}
  -\bigg(\frac1{\Phi''}\bigg)''(P_t f)\big[\Phi''(P_t f)\Gamma(P_t f)\big]^2.
  \end{equation}
\end{lemma}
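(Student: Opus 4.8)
The plan is to verify the identity \eqref{2-lem.1} by a direct computation in which the only real input is the Bochner–type commutation between $L$ and $\Gamma$. To lighten notation I would write $u=P_tf$, so that $\partial_t u=Lu$, and I would treat the left-hand side as an operator applied to the product $\Phi''(u)\Gamma(u)$. The strategy is to expand both $L\big[\Phi''(u)\Gamma(u)\big]$ and $\partial_t\big[\Phi''(u)\Gamma(u)\big]$ separately, and then subtract. Because $\partial_t$ is a first-order (derivation-like) operation while $L$ is second order, the two expansions differ precisely by the second-order ``defect'' terms, which is exactly where $\Gamma_2$ and the curvature should surface.

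First I would compute the time derivative. Since $\partial_t u=Lu$, the chain and product rules give
\begin{equation*}
\frac{\partial}{\partial t}\big[\Phi''(u)\Gamma(u)\big]
=\Phi'''(u)(Lu)\,\Gamma(u)+\Phi''(u)\,\frac{\partial}{\partial t}\Gamma(u),
\end{equation*}
and I would use $\frac{\partial}{\partial t}\Gamma(u)=2\Gamma(u,Lu)$, which follows from differentiating $\Gamma(u)=\Gamma(u,u)$ and the bilinearity of $\Gamma$. Next I would expand the action of $L$. Here the key tools are the Leibniz-type rules for $L$ acting on a product: for smooth $F$ one has $L\big(F(u)\big)=F'(u)Lu+F''(u)\Gamma(u)$, and for a product of two functions $L(ab)=aLb+bLa+2\Gamma(a,b)$. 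Applying these with $a=\Phi''(u)$ and $b=\Gamma(u)$ produces a sum involving $L\Gamma(u)$, the term $\Gamma\big(\Phi''(u),\Gamma(u)\big)$, and the pure $\Phi$-derivative pieces.

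The decisive step is to rewrite $L\Gamma(u)$ using the very definition of $\Gamma_2$ given in the excerpt, namely $L\Gamma(u)=2\Gamma_2(u)+2\Gamma(u,Lu)$. When I substitute this into the expansion of $L\big[\Phi''(u)\Gamma(u)\big]$ and then subtract $\partial_t\big[\Phi''(u)\Gamma(u)\big]$, the terms proportional to $\Gamma(u,Lu)$ and to $\Phi'''(u)(Lu)\Gamma(u)$ are designed to cancel. What survives is a combination of $\Phi''(u)\Gamma_2(u)$, a cross term $\Gamma\big(\Phi''(u),\Gamma(u)\big)$, and terms with $\Phi'''$, $\Phi^{(4)}$ and $\Gamma(u)^2$. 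The remaining task is purely algebraic: to recognize this surviving combination as the right-hand side of \eqref{2-lem.1}. For that I would use the chain-rule identity $\Gamma_2(\Phi'(u))=\big(\Phi''(u)\big)^2\Gamma_2(u)+\text{(lower-order curvature-free terms)}$ coming from $\nabla\Phi'(u)=\Phi''(u)\nabla u$, together with the explicit second derivative $\big(1/\Phi''\big)''=\big(2(\Phi''')^2-\Phi''\Phi^{(4)}\big)/(\Phi'')^3$, and match coefficients of $\Gamma(u)^2$ and of the Hessian-type terms.

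\textbf{The main obstacle} I expect is the bookkeeping in this coefficient-matching: the factor $2\Gamma_2(\Phi'(u))/\Phi''(u)$ on the right already encodes, via $\nabla\Phi'(u)=\Phi''(u)\nabla u$, both the genuine $\Phi''(u)\Gamma_2(u)$ contribution and an extra Hessian term of the form involving $\Phi'''(u)\langle\Hess_u,\nabla u\otimes\nabla u\rangle$, so I will have to be careful to isolate exactly which pieces of the $L$-expansion are ``$\Gamma_2$-like'' versus which collapse into the $\big(1/\Phi''\big)''\,\big(\Phi''\Gamma(u)\big)^2$ term. In particular the cross term $\Gamma\big(\Phi''(u),\Gamma(u)\big)=\Phi'''(u)\,\Gamma\big(u,\Gamma(u)\big)$ and the term arising from $2\Gamma_2(\Phi'(u))$ must conspire to leave only the scalar $\Gamma(u)^2$ combination; verifying that the $\Phi^{(4)}$ and $(\Phi''')^2$ coefficients assemble precisely into $-\big(1/\Phi''\big)''(u)$ is the delicate point, and it is where the hypotheses on $\Phi$ implicitly guarantee that the identity is clean rather than merely an inequality.
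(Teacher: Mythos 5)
Your proposal is correct and follows essentially the same route as the paper's proof: expand $\big(L-\partial_t\big)\big[\Phi''(P_tf)\Gamma(P_tf)\big]$ via the diffusion chain/product rules and the definition $L\Gamma(u)=2\Gamma_2(u)+2\Gamma(u,Lu)$, then absorb the surviving $\Phi''\Gamma_2(u)$, $\Phi'''\Gamma(u,\Gamma(u))$ and $\Gamma(u)^2$ terms using the chain-rule identity $\Gamma_2(\Phi'(u))=\Phi''(u)^2\Gamma_2(u)+\Phi''(u)\Phi'''(u)\Gamma(u,\Gamma(u))+\Phi'''(u)^2\Gamma(u)^2$ together with $-\big(1/\Phi''\big)''=\big(\Phi''\Phi^{(4)}-2(\Phi''')^2\big)/(\Phi'')^3$, exactly as in the paper. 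One small correction: the identity is clean for any smooth $\Phi$ with $\Phi''>0$ -- the convexity hypotheses on $\Phi''$ and $-1/\Phi''$ are not needed for this lemma, only later for sign considerations.
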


\begin{proof}
It has been proved in \cite[Lemma 2.1]{Qian}. We present it here for the reader's convenience.
By the definition of $\Gamma$ and $\Gamma_2$,
  \begin{align*}
  &\hskip13pt \bigg(L-\frac{\partial}{\partial t}\bigg)\big[\Phi''(P_t f)\Gamma(P_t f)\big]\\
  &=\Gamma(P_t f)L\big[\Phi''(P_t f)\big]+\Phi''(P_t f)L\big[\Gamma(P_t f)\big]
  +2\,\Gamma\big(\Phi''(P_t f),\Gamma(P_t f)\big)\\
  &\hskip13pt -\Gamma(P_t f)\Phi'''(P_tf)LP_t f-2\,\Phi''(P_t f)\Gamma(LP_tf,P_t f)\\
  &=\Phi^{(4)}(P_t f)\Gamma(P_t f)^2+2\,\Phi''(P_tf)\Gamma_2(P_t f)
  +2\,\Phi'''(P_tf)\Gamma\big(P_t f,\Gamma(P_t f)\big).
  \end{align*}
Next, since
  $$\Gamma_2\big(\Phi'(P_t f)\big)=\Phi''(P_t f)^2\Gamma_2(P_t f)
  +\Phi''(P_t f)\Phi'''(P_t f)\Gamma\big(P_t f,\Gamma(P_t f)\big)+\Phi'''(P_t f)^2\Gamma(P_t f)^2,$$
we have
  $$\bigg(L-\frac{\partial}{\partial t}\bigg)\big[\Phi''(P_t f)\Gamma(P_t f)\big]
  =\frac{2\,\Gamma_2(\Phi'(P_t f))}{\Phi''(P_t f)}+\frac{\Gamma(P_t f)^2}{\Phi''(P_t f)}
  \big[\Phi''(P_t f)\Phi^{(4)}(P_t f)-2\,\Phi'''(P_tf)^2\big].$$
Combining this with the identity
  $$-\bigg(\frac1{\Phi''(x)}\bigg)''=\frac1{\Phi''(x)^3}\big[\Phi''(x)\Phi^{(4)}(x)-2\,\Phi'''(x)^2\big]$$
leads to the desired result.
\end{proof}

Now we are ready to prove the main result of this paper.

\begin{proof}[Proof of Theorem \ref{equivalence}]
(i) We first prove the ``if\,'' part. In fact, this has more or less been done in
\cite[Theorem 1.1]{Qian}, except that the condition \eqref{integral-CDC} was replaced by
the pointwsie one \eqref{CDC}. Note that $\int_M L\varphi\,\d\mu=0$ for any $\varphi\in C^\infty(M)$.
Integrating both sides of \eqref{2-lem.1} on $M$ with respect to $\mu$ gives us
  \begin{equation}\label{2.1}
  -\frac{\d}{\d t}\int_M \Phi''(P_t f)\Gamma(P_t f)\,\d\mu
  =\int_M\bigg\{\frac{2\,\Gamma_2(\Phi'(P_t f))}{\Phi''(P_t f)}
  -\bigg(\frac1{\Phi''}\bigg)''(P_t f)\big[\Phi''(P_t f)\Gamma(P_t f)\big]^2\bigg\}\d\mu.
  \end{equation}
Applying \eqref{integral-CDC} with $f$ replaced by $P_t f$ gives us
  $$-\frac{\d}{\d t}q_\Phi(P_t f)\geq 2Kq_\Phi(P_t f)+\frac2{m C_\Phi(P_t f)} \big(q_\Phi(P_t f)\big)^2.$$
Since the function $\I\ni x\mapsto x^2\Phi''(x)$ is convex (cf. \cite[p.330]{Chafai}),
Jensen's inequality leads to
  \begin{equation}\label{C-Phi}
  C_\Phi(P_t f)=\int_M (P_t f)^2\Phi''(P_t f)\,\d\mu\leq \int_M P_t\big(f^2\Phi''(f)\big)\,\d\mu=C_\Phi(f).
  \end{equation}
Therefore,
  $$-\frac{\d}{\d t}q_\Phi(P_t f)\geq 2Kq_\Phi(P_t f)+\frac2{m C_\Phi(f)} \big(q_\Phi(P_t f)\big)^2.$$
Solving this differential inequality yields \eqref{rate-of-change}. \medskip

(ii) For the ``only if\,'' part, we can give two different proofs.

\emph{First proof.} Noticing that the equality holds in \eqref{rate-of-change} at $t=0$, we have
  $$q_\Phi(P_t f)-q_\Phi(f)
  \leq e^{-2Kt}\bigg[\frac1{q_\Phi(f)}+\frac{1-e^{-2Kt}}{mK C_\Phi(f)}\bigg]^{-1}-q_\Phi(f).$$
Dividing both sides by $t$ and letting $t\downarrow 0$, we obtain
  \begin{align}\label{derivative}
  \int_M\frac{\partial}{\partial t}\big(\Phi''(P_tf)\Gamma(P_t f) \big)\Big|_{t=0}\,\d \mu
  &\leq -2Kq_\Phi(f)-\frac2{m C_\Phi(f)}\, (q_\Phi(f))^2.
  \end{align}
Letting $t\downarrow 0$ in \eqref{2.1}, we obtain
  $$-\int_M\frac{\partial}{\partial t}\big(\Phi''(P_tf)\Gamma(P_t f) \big)\Big|_{t=0}\,\d \mu
  =\int_M\bigg\{\frac{2\,\Gamma_2(\Phi'(f))}{\Phi''(f)}
  -\bigg(\frac1{\Phi''}\bigg)''(f)\big[\Phi''(f)\Gamma(f)\big]^2\bigg\}\d\mu.$$
Combining it with \eqref{derivative} yields the inequality \eqref{integral-CDC}.

\emph{Second proof.} Without using the equality \eqref{2.1}, we can give another proof of the
``only if\,'' part by making use of the $\Gamma_2$ calculus and integration by parts formula,
though this proof is much longer than the first one.

We start from \eqref{derivative}. Since
  \begin{align*}
  \frac{\partial}{\partial t}\big(\Phi''(P_tf)\Gamma(P_t f) \big)\Big|_{t=0}&=
  \big[\Phi'''(P_t f)(LP_t f)\Gamma(P_t f)+\Phi''(P_t f)\cdot 2\,\Gamma(LP_t f,P_t f)\big]\big|_{t=0}\cr
  &=\Phi'''(f)(L f)\Gamma( f)+ 2\,\Phi''( f)\Gamma(L f, f)\\
  &=\Phi'''(f)(L f)\Gamma( f)+ 2\,\Gamma(L f, \Phi'( f)),
  \end{align*}
the inequality \eqref{derivative} becomes
  \begin{equation}\label{derivative.1}
  \int_M\big[\Phi'''(f)(L f)\Gamma( f)+ 2\,\Gamma(L f, \Phi'( f))\big]\,\d \mu
  \leq -2Kq_\Phi(f)-\frac2{m C_\Phi(f)}\, (q_\Phi(f))^2.
  \end{equation}

Noticing that $ L\Phi'(f)=\Phi''(f)Lf+\Phi'''(f)\Gamma(f)$, we have
  \begin{equation}\label{proof.1}
  Lf = \frac{L\Phi'(f)}{\Phi''(f)}-\frac{\Phi'''(f)\Gamma(f)}{\Phi''(f)}.
  \end{equation}
As a result,
  \begin{align*}
  \Gamma(Lf,\Phi'(f))&=\Gamma\bigg(\frac{L\Phi'(f)}{\Phi''(f)},\Phi'(f)\bigg)
  -\Gamma\bigg(\frac{\Phi'''(f)\Gamma(f)}{\Phi''(f)},\Phi'(f)\bigg)\\
  &=\frac1{\Phi''(f)}\Gamma\big(L\Phi'(f),\Phi'(f)\big)-\frac{L\Phi'(f)}{(\Phi''(f))^2}\Gamma(\Phi''(f),\Phi'(f))\\
  &\hskip13pt -\frac1{\Phi''(f)}\Gamma\big(\Phi'''(f)\Gamma(f),\Phi'(f)\big)
  +\frac{\Phi'''(f)\Gamma(f)}{(\Phi''(f))^2}\Gamma(\Phi''(f),\Phi'(f)).
  \end{align*}
By \eqref{proof.1},
  \begin{equation}\label{proof.2}
  \Gamma(Lf,\Phi'(f))=\frac1{\Phi''(f)}\Gamma\big(L\Phi'(f),\Phi'(f)\big)
  -\Gamma\big(\Phi'''(f)\Gamma(f),f\big) -\Gamma(\Phi''(f),f)Lf.
  \end{equation}
By the integration by parts formula, we obtain
  $$  -\int_M \Gamma(\Phi''(f),f)Lf\,\d\mu=\int_M\Gamma\big(\Gamma(\Phi''(f),f),f\big)\,\d\mu
  =\int_M\Gamma\big(\Phi'''(f)\Gamma(f),f\big)\,\d\mu.$$
Therefore,
  $$\int_M \Gamma(Lf,\Phi'(f))\,\d\mu=\int_M \frac1{\Phi''(f)}\Gamma\big(L\Phi'(f),\Phi'(f)\big)\,\d\mu.$$
Substituting this identity into \eqref{derivative.1} yields
  \begin{equation}\label{proof.3}
  \int_M \Phi'''(f)(L f)\Gamma( f)\,\d\mu+\int_M \frac{2\,\Gamma\big(L\Phi'(f),\Phi'(f)\big)}{\Phi''(f)}\,\d\mu
  \leq -2Kq_\Phi(f)-\frac2{m C_\Phi(f)} q_\Phi(f)^2.
  \end{equation}

By the definition of the operator $\Gamma_2$, we get
  \begin{equation*}
  2\,\Gamma\big(L\Phi'(f),\Phi'(f)\big)=L\Gamma(\Phi'(f))-2\,\Gamma_2(\Phi'(f)),
  \end{equation*}
Thus
  \begin{equation}\label{proof.4}
  \begin{split}
  &\hskip13pt\int_M \Phi'''(f)(L f)\Gamma( f)\,\d\mu+\int_M \frac{2\,\Gamma\big(L\Phi'(f),\Phi'(f)\big)}{\Phi''(f)}\,\d\mu\cr
  & =-2\int_M\frac{\Gamma_2(\Phi'(f))}{\Phi''(f)}\,\d\mu+\int_M \bigg[ \Phi'''(f)(L f)\Gamma( f)
  +\frac{L\Gamma(\Phi'(f))}{\Phi''(f)}\bigg]\d\mu.
  \end{split}
  \end{equation}
The integration by parts formula leads to
  \begin{equation}\label{proof.5}
  \begin{split}
  \int_M \Phi'''(f)(L f)\Gamma( f)\,\d\mu&=-\int_M\Gamma\big(f,\Phi'''(f)\Gamma( f)\big)\,\d\mu\\
  &=-\int_M \Phi'''(f)\Gamma\big(f,\Gamma( f)\big)\,\d\mu-\int_M \Gamma( f)\Gamma\big(f,\Phi'''(f)\big)\,\d\mu.
  \end{split}
  \end{equation}
Again by the integration by parts formula,
  \begin{align*}
  \int_M \frac{L\Gamma(\Phi'(f))}{\Phi''(f)}\,\d\mu&=\int_M \Gamma(\Phi'(f))L\big[(\Phi''(f))^{-1}\big]\,\d\mu\\
  &=-\int_M \frac{\Gamma(\Phi'(f))}{(\Phi''(f))^2}L\Phi''(f)\,\d\mu
  +2\int_M \frac{\Gamma(\Phi'(f))}{(\Phi''(f))^3}\Gamma(\Phi''(f))\,\d\mu\\
  &=-\int_M \Gamma(f)L\Phi''(f)\,\d\mu +2\int_M \frac{\Gamma(f)}{\Phi''(f)}\Gamma(\Phi''(f))\,\d\mu\\
  &=\int_M \Phi'''(f)\Gamma\big(\Gamma(f),f\big)\,\d\mu +2\int_M \frac{\Gamma(f)}{\Phi''(f)}\Gamma(\Phi''(f))\,\d\mu.
  \end{align*}
Combining this equality with \eqref{proof.5}, we arrive at
  \begin{align*}
  \int_M \bigg[ \Phi'''(f)(L f)\Gamma( f)+\frac{L\Gamma(\Phi'(f))}{\Phi''(f)}\bigg]\d\mu
  &=\int_M \Gamma(f)\bigg[\frac{2\,\Gamma(\Phi''(f))}{\Phi''(f)}-\Gamma\big(f,\Phi'''(f)\big)\bigg]\d\mu\\
  &=\int_M \frac{\Gamma(f)^2}{\Phi''(f)}\big[2(\Phi'''(f))^2-\Phi''(f)\Phi^{(4)}(f)\big]\d\mu\\
  &=\int_M \big[\Gamma(f)\Phi''(f)\big]^2\bigg(\frac1{\Phi''}\bigg)''(f)\,\d\mu.
  \end{align*}
Substituting this identity into \eqref{proof.4} yields
  \begin{align*}
  &\hskip13pt \int_M \Phi'''(f)(L f)\Gamma( f)\,\d\mu+\int_M \frac{2\,\Gamma\big(L\Phi'(f),\Phi'(f)\big)}{\Phi''(f)}\,\d\mu\\
  &=-2\int_M\frac{\Gamma_2(\Phi'(f))}{\Phi''(f)}\,\d\mu
  +\int_M \big[\Gamma(f)\Phi''(f)\big]^2\bigg(\frac1{\Phi''}\bigg)''(f)\,\d\mu.
  \end{align*}
Combining this with \eqref{proof.3} finishes the proof.
\end{proof}

Now we prove Proposition \ref{prop-1}. This has indeed been done in the proof of
\cite[Theorem 1.1]{Qian}, and we present it here for the reader's convenience.

\begin{proof}[Proof of Proposition \ref{prop-1}]
Since $-1/\Phi''$ is convex, we have $(-1/\Phi'')''(x)\geq 0$,
hence it suffices to show that
  \begin{equation}\label{prop-1.1}
  \int_M \frac{\Gamma_2(\Phi'(f))}{\Phi''(f)}\,\d\mu
  \geq Kq_\Phi(f)+\frac{(q_\Phi(f))^2}{m C_\Phi(f)}.
  \end{equation}
By the convexity of $\Phi$ and the curvature-dimension condition \eqref{CDC}, we have
  \begin{equation}\label{prop-1.2}
  \begin{split}
  \int_M \frac{\Gamma_2(\Phi'(f))}{\Phi''(f)}\,\d\mu
  &\geq K\int_M \frac{\Gamma(\Phi'(f))}{\Phi''(f)}\,\d\mu
  +\frac1m \int_M \frac{|L(\Phi'(f))|^2}{\Phi''(f)}\,\d\mu\\
  &=K q_\Phi(f)+\frac1m \int_M \frac{|L(\Phi'(f))|^2}{\Phi''(f)}\,\d\mu.
  \end{split}
  \end{equation}
By the Cauchy inequality,
  \begin{align*}
  \bigg(\int_M f L(\Phi'(f))\,\d\mu\bigg)^2
  &=\bigg(\int_M f\sqrt{\Phi''(f)}\, \frac{L(\Phi'(f))}{\sqrt{\Phi''(f)}}\,\d\mu\bigg)^2\\
  &\leq \int_M f^2\Phi''(f)\,\d\mu\cdot \int_M \frac{|L(\Phi'(f))|^2}{\Phi''(f)}\,\d\mu,
  \end{align*}
therefore
  \begin{equation}\label{prop-1.3}
  \begin{split}
  \int_M \frac{|L(\Phi'(f))|^2}{\Phi''(f)}\,\d\mu
  &\geq \frac1{C_\Phi(f)}\bigg(\int_M f L(\Phi'(f))\,\d\mu\bigg)^2\\
  &= \frac1{C_\Phi(f)}\bigg(\int_M \Gamma(f,\Phi'(f))\,\d\mu\bigg)^2
  =\frac{(q_\Phi(f))^2}{C_\Phi(f)},
  \end{split}
  \end{equation}
where in the first equality we have used the integration by parts formula. Substituting this
inequality into \eqref{prop-1.2} completes the proof.
\end{proof}

Finally we turn to the

\begin{proof}[Proof of Example \ref{exa-2}]
We shall present a proof using basic mathematical analysis.
Fix any $f\in C_c^\infty(\R^1)$. We have
  \begin{equation}\label{proof-exa.1}
  \begin{split}
  \int_{\R^1}\Gamma_2(f)\,\d\mu&=\int_{\R^1} \big[(f'')^2-V''\,(f')^2\big]\,\d\mu\\
  &\geq \int_{\{|x|> 2\}} (f')^2\,\d\mu+e^{V_{\rm min}}\int_{\{|x|\leq 2\}}
  \big[(f'')^2-V''\,(f')^2\big]\,\d x,
  \end{split}
  \end{equation}
where $V_{\rm min}=\min\{V(x):x\in[-2,2]\}$.
We shall show that there is $K_1>0$ such that
  \begin{equation}\label{proof-exa.2}
  I:=\int_{-2}^2 \big[(f'')^2-V''\,(f')^2\big]\,\d x\geq K_1\int_{-2}^2 (f')^2\,\d x.
  \end{equation}

We distinguish two cases: (i) There is $x_0\in[-2,2]$ such that $f'(x_0)=0$. Then
by Cauchy's inequality,
  \begin{align*}
  \int_{-2}^2 (f'(x))^2\,\d x&=\int_{-2}^2 \bigg(\int_{x_0}^x f''(y)\,\d y\bigg)^2\,\d x
  \leq 16 \int_{-2}^2 (f''(y))^2\,\d y \leq 16I
  \end{align*}
since $-V''\geq 0$. Therefore \eqref{proof-exa.2} holds with $K_1=1/16$.

(ii) For any $x\in [-2,2]$ one has $f'(x)\neq 0$. Without loss of generality, we assume
$f'_{\rm min}:=\min\{f'(x):x\in[-2,2]\}>0$ and the minimum is achieved at $x_0\in[-2,2]$,
i.e. $f'(x_0)=f'_{\rm min}$. Again we consider two different cases:
\begin{itemize}
\item[(a)] If $f'(x)\leq 2 f'_{\rm min}$ for all $x\in [-2,2]$, then
  \begin{equation}\label{proof-exa.3}
  \int_{-2}^2 (f'(x))^2\,\d x \leq 16 (f'_{\rm min})^2.
  \end{equation}
On the other hand, we can find $\delta\in(0,1)$ such that for all $2-\delta\leq |x|\leq 2$,
it holds $-V''(x)\geq 1/2$. Note that $\delta$ is independent on the test function $f$. Then
  \begin{align*}
  I\geq \int_{-2}^2 (-V''(x))(f'(x))^2\,\d x&\geq \frac12\int_{-2}^{-2+\delta} (f'(x))^2\,\d x
  +\frac12\int_{2-\delta}^2 (f'(x))^2\,\d x
  \geq \delta (f'_{\rm min})^2.
  \end{align*}
Combining this with \eqref{proof-exa.3}, we obtain the desired inequality \eqref{proof-exa.2}
with $K_1=\delta/16$.

\item[(b)] If there is $x_1\in[-2,2]$ such that $f'(x_1)>2 f'_{\rm min}=2f'(x_0)$, then
by Cauchy's inequality,
  \begin{equation}\label{proof-exa.4}
  (f'(x_0))^2\leq \big[f'(x_1)-f'(x_0)\big]^2 =\bigg(\int_{x_0}^{x_1} f''(x)\,\d x\bigg)^2
  \leq 4\int_{-2}^2 (f''(x))^2\,\d x.
  \end{equation}
Moreover,
  \begin{align*}
  \int_{-2}^2 (f'(x))^2\,\d x&\leq 2\int_{-2}^2 \big[f'(x)-f'(x_0)\big]^2\,\d x +8(f'(x_0))^2\\
  &\leq 2\int_{-2}^2 \bigg[\int_{x_0}^x f''(y)\,\d y\bigg]^2 \d x +32\int_{-2}^2 (f''(x))^2\,\d x\\
  &\leq 32\int_{-2}^2 (f''(y))^2\,\d y+32\int_{-2}^2 (f''(x))^2\,\d x\leq 64I,
  \end{align*}
where in the second and third inequality we have used \eqref{proof-exa.4} and the Cauchy inequality,
respectively. Hence the inequality \eqref{proof-exa.2} holds in this case with $K_1=1/64$.
\end{itemize}

Summarizing the above discussions, we conclude that \eqref{proof-exa.2} holds with $K_1=(\delta/16)
\wedge(1/64)\in (0,1)$. Substituting this result into \eqref{proof-exa.1}, we have
  \begin{align*}
  \int_{\R^1}\Gamma_2(f)\,\d\mu&\geq \int_{\{|x|> 2\}} (f')^2\,\d\mu
  +e^{V_{\rm min}}K_1\int_{\{|x|\leq 2\}}(f')^2\,\d x\\
  &\geq e^{-{\rm osc}(V)}K_1 \int_{\R^1} \Gamma(f)\,\d\mu,
  \end{align*}
where ${\rm osc}(V)=\max\{V(x)-V(y):x,y\in[-2,2]\}$ is the oscillation of $V$ on the interval $[-2,2]$.
\end{proof}

\section{Extension to bounded smooth domains}
\def\A{{\rm \bf A}}

In this section, we assume $M$ is a complete Riemannian manifold and $D\subset M$ a connected
bounded smooth domain. We shall establish in Subsection 3.1 a similar version of Theorem
\ref{equivalence} on the domain $D$. When $M=\R^n$ and $D\subset \R^n$ is convex, we consider in the last subsection
the ground state of the Schr\"odinger operator $-\Delta+U$ on $D$ satisfying the Dirichlet
boundary condition, which can be seen as an application of the general result.

\subsection{An analog of Theorem \ref{equivalence} on a bounded smooth domain}

We first introduce some notations. Let $V\in C^2(\bar D)$ and $L=\Delta+\nabla V\cdot\nabla$.
Denote by $\d\mu =e^V\,\d x$. We write $N$ and $\d\A$ for the inward unit normal vector field and the area
measure of $\partial D$, respectively. Then for any $f,g\in C^2(\bar D)$,
it follows from the integration by parts formula that
  \begin{equation}\label{IBP-boundary}
  \begin{split}
  \int_D gLf\,\d\mu&=\int_D g\Delta f\,\d\mu+\int_D g\nabla V\cdot\nabla f\,\d\mu\\
  &=-\int_D \nabla(ge^V)\cdot\nabla f\,\d x-\int_{\partial D} ge^V Nf\,\d\A+\int_D g\nabla V\cdot\nabla f\,\d\mu\\
  &=-\int_D \nabla g\cdot\nabla f\,\d\mu-\int_{\partial D} ge^V Nf\,\d\A.
  \end{split}
  \end{equation}
Therefore, if $Nf=0$ on $\partial D$, that is, $f$ satisfies the Neumann boundary condition, then it holds
  $$\int_D gLf\,\d\mu=-\int_D \nabla g\cdot\nabla f\,\d\mu=-\int_D \Gamma(f,g)\,\d\mu.$$
where $\Gamma$ is the ``carr\'e du champ'' operator defined in Section 1. Moreover, let $X\in T\partial D$,
the tangent bundle over the boundary $\partial D$, then (here $\nabla$ is also used for the covariant
derivative operator)
  $$0=X(Nf)=\<\nabla_X N,\nabla f\>+\<N,\nabla_X \nabla f\>,$$
therefore
  \begin{equation}\label{Hessian}
  \Hess_f(N,X)=-\<\nabla_X N,\nabla f\>=\II(X,\nabla f),
  \end{equation}
where $\II$ is the second fundamental form of $\partial D$. Next,
  $$N|\nabla f|^2=2\<\nabla_N\nabla f,\nabla f\>=2\,\Hess_f(N,\nabla f),$$
which, together with \eqref{Hessian}, gives us the useful identity
  \begin{equation}\label{2nd-fund-form}
  N|\nabla f|^2=2\,\II(\nabla f,\nabla f) \quad \mbox{for all } f\in C^2(\bar D) \mbox{ with } Nf=0.
  \end{equation}

Let $P_t$ be the semigroup associated to the reflecting diffusion process in $D$ generated by $L$.
Then by \cite[Theorem 3.1.3]{Wang-book}, for $f\in C^\infty(D)$ with $Nf=0$
on $\partial D$, it holds
  \begin{equation}\label{heat-eq}
  \frac{\partial}{\partial t}P_t f=LP_t f=P_t Lf\quad\mbox{and}\quad N(P_t f)|_{\partial D}=0,\quad t\geq0.
  \end{equation}

Let $\Phi:\I\to\R$ be a convex function such that $\Phi''$ and $-/\Phi''$ are convex. Again we define
the $\Phi$-entropy by
  $$\Ent_\Phi(f)=-\int_D \Phi(f)\,\d\mu,$$
where $\d\mu=e^V\,\d x$. Then for $f\in C^1(D)$ with $Nf|_{\partial D}=0$, we have by \eqref{heat-eq}
and \eqref{IBP-boundary} that
  $$\frac{\d}{\d t}\Ent_\Phi(P_t f)=-\int_D \Phi'(P_t f)LP_t f\,\d\mu
  =\int_D \Phi''(P_t f)\Gamma(P_t f)\,\d\mu.$$
We shall partly generalize Theorem \ref{equivalence} to the present setting. To this end,
We still use the notations $q_\Phi(f)=\int_D \Phi''(f)\Gamma(f)\,d\mu$ and
$C_\Phi(f)=\int_D f^2\Phi''(f)\,\d\mu$.

\begin{theorem}\label{3-thm-1}
Let $K\in\R$ and $m>0$. Then for any $f\in C^\infty(D,\I)$ with $Nf|_{\partial D}=0$,
 \begin{equation}\label{3-thm-1.1}
  q_\Phi(P_t f)\leq e^{-2Kt}\bigg[\frac1{q_\Phi(f)}
  +\frac{1-e^{-2Kt}}{mKC_\Phi(f)}\bigg]^{-1}  \quad \mbox{for all } t>0
  \end{equation}
if and only if the following integral form curvature-dimension condition holds: for any
$f\in C^\infty(D,\I)$ with $Nf|_{\partial D}=0$,
  \begin{equation}\label{3-thm-1.2}
  \begin{split}
  &\hskip13pt \int_D \bigg[\frac{2\,\Gamma_2(\Phi'(f))}{\Phi''(f)}
  -\bigg(\frac1{\Phi''}\bigg)''(f)\big(\Phi''(f)\Gamma(f)\big)^2\bigg]\d\mu
  + 2\int_{\partial D} e^V\Phi''(f)\II(\nabla f,\nabla f)\,\d\A \\
  &\hskip43pt \geq 2Kq_\Phi(f)+\frac{2}{m C_\Phi(f)}(q_\Phi(f))^2.
  \end{split}
  \end{equation}
\end{theorem}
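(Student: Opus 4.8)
The plan is to mimic the proof of Theorem \ref{equivalence}, the only genuinely new ingredient being the boundary contribution that appears when the pointwise identity of Lemma \ref{2-lem} is integrated over $D$. The crucial observation is that \eqref{2-lem.1} is a \emph{pointwise} equality, hence it remains valid verbatim on $D$; what changes is that $\int_D L\varphi\,\d\mu$ no longer vanishes but produces a boundary integral, as dictated by \eqref{IBP-boundary}.

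First I would establish the boundary-corrected analog of \eqref{2.1}. Writing $\psi_t=\Phi''(P_t f)\Gamma(P_t f)$ and integrating \eqref{2-lem.1} over $D$ against $\mu$, the term $\int_D L\psi_t\,\d\mu$ is treated via \eqref{IBP-boundary} with $g\equiv1$, which gives $\int_D L\psi_t\,\d\mu=-\int_{\partial D}e^V N\psi_t\,\d\A$. Because $N(P_t f)=0$ on $\partial D$ for all $t$ by \eqref{heat-eq}, the term $\Phi'''(P_t f)(NP_t f)\Gamma(P_t f)$ in $N\psi_t$ drops out, leaving $N\psi_t=\Phi''(P_t f)\,N|\nabla P_t f|^2=2\,\Phi''(P_t f)\II(\nabla P_t f,\nabla P_t f)$ by the identity \eqref{2nd-fund-form}. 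This reproduces precisely the boundary integral occurring in \eqref{3-thm-1.2}, and after rearrangement one obtains that $-\frac{\d}{\d t}q_\Phi(P_t f)$ equals the left-hand side of \eqref{3-thm-1.2} with $f$ replaced by $P_t f$.

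With this identity in hand, both implications proceed as in the compact case. For the ``if'' direction, apply \eqref{3-thm-1.2} to $P_t f$ (legitimate since $N(P_t f)=0$) to obtain the differential inequality
$$-\frac{\d}{\d t}q_\Phi(P_t f)\geq 2Kq_\Phi(P_t f)+\frac{2}{mC_\Phi(P_t f)}\big(q_\Phi(P_t f)\big)^2.$$
The monotonicity $C_\Phi(P_t f)\leq C_\Phi(f)$ follows, exactly as in \eqref{C-Phi}, from the convexity of $x\mapsto x^2\Phi''(x)$ combined with Jensen's inequality and the invariance $\int_D P_t g\,\d\mu=\int_D g\,\d\mu$ (which itself follows from \eqref{IBP-boundary}, since $P_t g$ obeys the Neumann condition and therefore $\frac{\d}{\d t}\int_D P_t g\,\d\mu=\int_D LP_t g\,\d\mu=0$). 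Solving the resulting inequality yields \eqref{3-thm-1.1}. For the ``only if'' direction I would use the short argument: equality holds in \eqref{3-thm-1.1} at $t=0$, so dividing by $t$ and letting $t\downarrow0$ bounds $\frac{\d}{\d t}q_\Phi(P_t f)\big|_{t=0}$ from above by $-2Kq_\Phi(f)-\frac{2}{mC_\Phi(f)}(q_\Phi(f))^2$; comparing this with the $t\downarrow0$ limit of the boundary-corrected identity from the first step gives \eqref{3-thm-1.2}.

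The main obstacle is the careful bookkeeping of the boundary term. One must verify that the reflecting semigroup preserves the Neumann condition for all times (so that the $NP_t f$ contribution in $N\psi_t$ is annihilated), and then correctly invoke \eqref{2nd-fund-form} to convert the surviving normal derivative into the second-fundamental-form expression $\II(\nabla P_t f,\nabla P_t f)$. A secondary point needing attention is the justification of differentiation under the integral sign and of the integration by parts on $D$; since $D$ is bounded with smooth boundary and all data are smooth, these are routine, but the boundary regularity and the validity of \eqref{heat-eq} should be explicitly noted.
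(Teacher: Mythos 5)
Your proposal is correct and takes essentially the same route as the paper's own proof: you integrate the pointwise identity of Lemma \ref{2-lem} over $D$, convert the $L$-term into the boundary integral via \eqref{IBP-boundary}, kill the $\Phi'''(P_tf)\,NP_tf$ contribution using the Neumann condition from \eqref{heat-eq}, rewrite $N\Gamma(P_tf)$ through \eqref{2nd-fund-form}, and then run the identical sufficiency (differential inequality plus the Jensen bound $C_\Phi(P_tf)\leq C_\Phi(f)$ as in \eqref{C-Phi}) and necessity ($t\downarrow0$ comparison) arguments. Your explicit justification of the invariance $\int_D P_tg\,\d\mu=\int_D g\,\d\mu$ needed for the Jensen step is a minor addition that the paper leaves implicit, but otherwise the two proofs coincide.
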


Before going into the proofs, we present the following analog of Proposition \ref{prop-1}.

\begin{proposition}\label{3-prop}
If the domain $D$ is convex, then the pointwise curvature-dimension condition \eqref{CDC}
implies the integral form curvature-dimension condition \eqref{3-thm-1.2}.
\end{proposition}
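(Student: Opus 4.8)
The plan is to mimic the proof of Proposition \ref{prop-1}, isolating the single point where the presence of the boundary matters and checking that convexity of $D$ together with the Neumann condition forces the relevant boundary contributions to behave favorably. First I would record the geometric consequence of convexity: with the inward-normal convention fixed in \eqref{Hessian}, the domain $D$ is convex precisely when its second fundamental form satisfies $\II\geq0$ on $\partial D$. Since $\Phi$ is convex we have $\Phi''(f)\geq0$, and $e^V>0$, so the boundary integral appearing in \eqref{3-thm-1.2},
$$2\int_{\partial D} e^V\Phi''(f)\,\II(\nabla f,\nabla f)\,\d\A,$$
is nonnegative. Consequently it suffices to establish the interior inequality obtained from \eqref{3-thm-1.2} by deleting this boundary term, namely
$$\int_D \bigg[\frac{2\,\Gamma_2(\Phi'(f))}{\Phi''(f)}-\bigg(\frac1{\Phi''}\bigg)''(f)\big(\Phi''(f)\Gamma(f)\big)^2\bigg]\d\mu\geq 2Kq_\Phi(f)+\frac{2}{m C_\Phi(f)}(q_\Phi(f))^2.$$

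Next, exactly as in the proof of Proposition \ref{prop-1}, I would use that $-1/\Phi''$ is convex, so $\big(\frac1{\Phi''}\big)''(f)\leq0$ and the term it multiplies is nonnegative; hence it is enough to prove the reduced inequality
$$\int_D \frac{\Gamma_2(\Phi'(f))}{\Phi''(f)}\,\d\mu\geq Kq_\Phi(f)+\frac{(q_\Phi(f))^2}{m C_\Phi(f)}.$$
Applying the pointwise condition \eqref{CDC} to $\Phi'(f)$ at each interior point, dividing by $\Phi''(f)>0$ and integrating over $D$, and using $\Gamma(\Phi'(f))=(\Phi''(f))^2\Gamma(f)$ to identify $\int_D \Gamma(\Phi'(f))/\Phi''(f)\,\d\mu=q_\Phi(f)$, gives the lower bound $Kq_\Phi(f)+\frac1m\int_D (L\Phi'(f))^2/\Phi''(f)\,\d\mu$. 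It then remains to bound the last integral from below by $(q_\Phi(f))^2/C_\Phi(f)$ via Cauchy--Schwarz, exactly as in \eqref{prop-1.3}.

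The only place where the domain genuinely enters is the integration by parts inside this Cauchy--Schwarz step. Here I would apply \eqref{IBP-boundary} to $\int_D f\,L(\Phi'(f))\,\d\mu$, which produces the boundary term $-\int_{\partial D} f e^V N(\Phi'(f))\,\d\A$. Since $N(\Phi'(f))=\Phi''(f)\,Nf$ and $f$ satisfies the Neumann condition $Nf|_{\partial D}=0$, this boundary term vanishes, leaving $\int_D f\,L(\Phi'(f))\,\d\mu=-\int_D \Gamma(f,\Phi'(f))\,\d\mu=-q_\Phi(f)$. Squaring removes the sign, so Cauchy--Schwarz yields $\int_D (L\Phi'(f))^2/\Phi''(f)\,\d\mu\geq (q_\Phi(f))^2/C_\Phi(f)$, and assembling the estimates completes the argument.

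The main point to be careful about — indeed essentially the only nontrivial issue — is the interplay of the two boundary contributions: convexity supplies the correct sign for the $\II$ term in \eqref{3-thm-1.2}, while the Neumann condition annihilates the boundary term created by integration by parts. Neither fact would suffice on its own, and if $D$ were not convex the $\II$ term could have the wrong sign, so I expect the convexity hypothesis to be exactly what is needed to push the closed-manifold argument of Proposition \ref{prop-1} through on $D$.
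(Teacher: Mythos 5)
Your proof is correct and follows essentially the same route as the paper: drop the boundary integral using $\II\geq0$ from convexity of $D$, drop the $(1/\Phi'')''$ term using convexity of $-1/\Phi''$, and then rerun the argument of Proposition \ref{prop-1} on $D$. The paper simply says ``the rest of the proof is the same as Proposition \ref{prop-1}'' at that point, whereas you explicitly verify the one detail it leaves implicit --- that the boundary term $-\int_{\partial D} f e^V N(\Phi'(f))\,\d\A$ produced by \eqref{IBP-boundary} vanishes because $N(\Phi'(f))=\Phi''(f)\,Nf=0$ under the Neumann condition --- which is a worthwhile check but not a different approach.
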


\begin{proof}
Since $D$ is convex, i.e. $\II\geq 0$, it suffices to show that
  $$\int_D \bigg[\frac{2\,\Gamma_2(\Phi'(f))}{\Phi''(f)}
  -\bigg(\frac1{\Phi''}\bigg)''(f)\big(\Phi''(f)\Gamma(f)\big)^2\bigg]\d\mu
  \geq 2Kq_\Phi(f)+\frac{2}{m C_\Phi(f)}(q_\Phi(f))^2.$$
Again by the convexity of $-1/\Phi''$, the above inequality is a consequence of
  $$\int_D \frac{\Gamma_2(\Phi'(f))}{\Phi''(f)}\d\mu
  \geq Kq_\Phi(f)+\frac{1}{m C_\Phi(f)}(q_\Phi(f))^2.$$
The rest of the proof is the same as Proposition \ref{prop-1}, thus we omit it here.
\end{proof}

Now we present the proof of Theorem \ref{3-thm-1} which is similar to those of Theorem \ref{equivalence}.

\begin{proof}[Proof of Theorem \ref{3-thm-1}] (i) Sufficiency. We still have
  \begin{equation}\label{derivative-eq}
  \bigg(L-\frac{\partial}{\partial t}\bigg)\big[\Phi''(P_t f)\Gamma(P_t f)\big]
  =\frac{2\,\Gamma_2(\Phi'(P_t f))}{\Phi''(P_t f)}-\bigg(\frac1{\Phi''}\bigg)''(P_t f)\big[\Phi''(P_t f)\Gamma(P_t f)\big]^2.
  \end{equation}
As in the proof of the ``if\,'' part in Theorem \ref{equivalence}, we shall integrate both
sides with respect to $\mu$ on $D$, but the difference is that the term involving $L$ does
not vanish here. Indeed, by the integration by parts formula \eqref{IBP-boundary},
  \begin{align*}
  \int_D L\big[\Phi''(P_t f)\Gamma(P_t f)\big]\,\d\mu
  &=-\int_{\partial D}e^V N\big[\Phi''(P_t f)\Gamma(P_t f)\big]\,\d\A\\
  &=-\int_{\partial D}e^V \big[\Phi'''(P_t f)\Gamma(P_t f) NP_t f+\Phi''(P_t f)N\Gamma(P_t f)\big]\,\d\A\\
  &=-2\int_{\partial D}e^V \Phi''(P_t f)\II(\nabla P_t f,\nabla P_t f)\,\d\A,
  \end{align*}
where in the last equality we have used the fact that $NP_t f=0$ on $\partial D$ and
\eqref{2nd-fund-form}. Therefore, integrating both sides of \eqref{derivative-eq}, we obtain
  \begin{equation}\label{3-thm-1.3}
  \begin{split}
  -\frac{\d}{\d t}\int_D \Phi''(P_t f)\Gamma(P_t f)\,d\mu
  &=\int_{D}\bigg\{\frac{2\,\Gamma_2(\Phi'(P_t f))}{\Phi''(P_t f)}
  -\bigg(\frac1{\Phi''}\bigg)''(P_t f)\big[\Phi''(P_t f)\Gamma(P_t f)\big]^2\bigg\}\d\mu\\
  &\hskip 13pt +2\int_{\partial D}e^V \Phi''(P_t f)\II(\nabla P_t f,\nabla P_t f)\,\d\A.
  \end{split}
  \end{equation}
Applying the integral form curvature-dimension condition \eqref{3-thm-1.2} with $P_t f$ in
place of $f$ gives us
  \begin{align*}
  -\frac{\d}{\d t}q_\Phi(P_t f)
  &\geq 2Kq_\Phi(P_t f)+\frac2{m C_\Phi(P_t f)}(q_\Phi(P_t f))^2\\
  &\geq 2Kq_\Phi(P_t f)+\frac2{m C_\Phi(f)}(q_\Phi(P_t f))^2,
  \end{align*}
where in the second inequality we have used \eqref{C-Phi}. Solving this inequality leads to
the estimate \eqref{3-thm-1.1} on the rate of change of $\Phi$-entropy.

(ii) Necessity. Noticing that the equality holds in \eqref{3-thm-1.1} at $t=0$, we have
  $$q_\Phi(P_t f)-q_\Phi(f)
  \leq e^{-2Kt}\bigg[\frac1{q_\Phi(f)}+\frac{1-e^{-2Kt}}{mK C_\Phi(f)}\bigg]^{-1}-q_\Phi(f).$$
Dividing both sides by $t$ and letting $t\downarrow 0$, we obtain
  \begin{align}\label{3-thm-1.4}
  \int_M\frac{\partial}{\partial t}\big(\Phi''(P_tf)\Gamma(P_t f) \big)\Big|_{t=0}\,\d \mu
  &\leq -2Kq_\Phi(f)-\frac2{m C_\Phi(f)}\, (q_\Phi(f))^2.
  \end{align}
Letting $t\downarrow 0$ in \eqref{3-thm-1.3} gives us
  \begin{align*}
  -\int_M\frac{\partial}{\partial t}\big(\Phi''(P_tf)\Gamma(P_t f) \big)\Big|_{t=0}\,\d \mu
  &=\int_{D}\bigg\{\frac{2\,\Gamma_2(\Phi'(f))}{\Phi''(f)}
  -\bigg(\frac1{\Phi''}\bigg)''(f)\big[\Phi''(f)\Gamma(f)\big]^2\bigg\}\d\mu\\
  &\hskip 13pt +2\int_{\partial D}e^V \Phi''(f)\II(\nabla f,\nabla f)\,\d\A.
  \end{align*}
Substituting the above equality into \eqref{3-thm-1.4} completes the proof.
\end{proof}

\begin{remark}
As in the proof of the necessity part of Theorem \ref{equivalence}, we can also give another proof
without using equality \eqref{3-thm-1.3}.
\end{remark}

\subsection{The ground state of $-\Delta+U$ on a bounded convex domain}

This part is motivated by \cite[Subsection 2.4]{Gong}, where a log-Sobolev inequality
was established for a measure whose density is given by the ground state of a Schr\"odinger
operator on a convex domain.
From now on, we assume $D\subset \R^n$ is a bounded convex domain and $U$ a smooth
potential function on $\bar D$. Consider the Schr\"odinger operator $-\Delta+U$ on $D$
with Dirichlet boundary condition, which has an increasing sequence of eigenvalues
$\lambda_0<\lambda_1\leq\lambda_2\leq\ldots$, with the associated eigenfunctions
$\{\phi_i\}_{i\geq0}$ which vanish on the boundary $\partial D$. The eigenfunction
$\phi_0>0$ and eigenvalue $\lambda_0$ are also called the ground state and ground state
energy, respectively. In the recent paper \cite{Andrews}, Andrews and Clutterbuck
proved the fundamental gap conjecture which states that if $U$ is convex, then the
spectral gap $\lambda_1-\lambda_0\geq \frac{3\pi^2}{\textup{diam}(D)^2}$, where $\textup{diam}(D)$
is the diameter of the domain $D$ (cf. \cite{Gong} for a probabilistic approach).

Now let $V=\log \phi_0^2=2\log\phi_0$. Although $V$ explodes on the boundary $\partial D$,
the function $e^V=\phi_0^2$ is smooth on the closure $\bar D$, thus we can consider the
measure $\d\mu=\phi_0^2\,\d x$ which will be assumed to be a probability on $D$. It is
easy to see that $\mu$ is a symmetric measure of the diffusion operator
  $$L=\Delta+\nabla(\log\phi_0^2)\cdot\nabla=\Delta+2\nabla\log\phi_0\cdot \nabla,$$
and
  \begin{equation}\label{3-IBP}
  \int_D fLg\,\d\mu=-\int_D \Gamma(f,g)\,\d\mu,\quad  \mbox{for any } f,g\in C^2(\bar D).
  \end{equation}
Compared to \eqref{IBP-boundary}, the integral involving the boundary vanishes since $\phi_0|_{\partial D}\equiv 0$.
Let $P_t$ be the semigroup generated by $L$ which can be constructed as follows. Consider
the It\^o SDE
  \begin{equation}\label{3-SDE}
  \d X_t=\sqrt 2\, \d B_t+2\nabla\log\phi_0(X_t)\,\d t,\quad X_0=x\in D,
  \end{equation}
where $B_t$ is a standard Brownian motion on $\R^n$. It follows from the properties of the ground
state $\phi_0$ that, starting from any point $x$ in the interior of $D$, the process $X_t$ will
not hit the boundary $\partial D$ (see \cite[Lemma 2.8]{Gong} for a proof). Therefore, unlike
\cite[(3.0.1)]{Wang-book}, we do not need to add an reflection term to the right hand
side of \eqref{3-SDE}; moreover, we have
  $$P_t f(x)=\E_x f(X_t),\quad \mbox{for all } x\in D, f\in C(\bar D).$$
Taking into consideration this fact and the integration by parts formula \eqref{3-IBP},
the calculations below are more like those in the case of a manifold without boundary,
cf. Sections 1 and 2.

As before, we take a smooth convex function $\Phi:\I\to\R$ such that $\Phi''$ and $-1/\Phi''$ are
also convex, and consider the $\Phi$-entropy
  $$\Ent_\Phi(f)=-\int_D \Phi(f)\,\d\mu=-\int_D \Phi(f)\phi_0^2\,\d x,\quad f\in C^\infty(\bar D,\I).$$
We shall give an estimate on the rate of change of the $\Phi$-entropy $\Ent_\Phi(P_t f)$, based on
Andrews and Clutterbuck's estimate on the modulus of concavity of $\log\phi_0$ (cf. \cite[Theorem 1.5]{Andrews} or \cite[Theorem 2.11]{Gong}).
Recall that a function $\tilde U\in C^1([0,\textup{diam}(D)/2])$ is called a modulus
of concavity of $U\in C^1(\bar D)$ if for any $x,y\in \bar D, x\neq y$, one has
  $$\Big\<\nabla U(x)-\nabla U(y),\frac{x-y}{|x-y|}\Big>\leq 2\tilde U'\bigg(\frac{|x-y|}2\bigg).$$
If `$\leq$' is replaced by `$\geq$', then $\tilde U$ is called a  modulus of convexity of $U$.

\begin{theorem}\label{3-thm-2}
Assume that the potential $U\in C^1(\bar D)$ admits a modulus of convexity
$\tilde U\in C^1([-\textup{diam}(D)/2,\textup{diam}(D)/2])$ which is an even function.
Denote by $\tilde\lambda_0$ the first Dirichlet eigenvalue of the one dimensional
Schr\"odinger operator $-\frac{\d^2}{\d t^2}+\tilde U$ on $[-\textup{diam}(D)/2,\textup{diam}(D)/2]$.
Then for any $f\in C^\infty(\bar D,\I)$,
  \begin{equation}\label{3-thm-2.1}
  q_\Phi(P_t f)\leq e^{4t(\tilde V(0)-\tilde\lambda_0)}q_\Phi(f),
  \end{equation}
where $q_\Phi(f)=\int_D \Phi''(f)\Gamma(f)\,\d\mu$.
\end{theorem}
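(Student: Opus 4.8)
The plan is to read \eqref{3-thm-2.1} as the $m=\infty$ specialization of the rate estimate of Theorem \ref{3-thm-1}. Indeed, letting $m\to\infty$ in \eqref{3-thm-1.1} (as in Remark \ref{remark-1}) collapses the right-hand side to $e^{-2Kt}q_\Phi(f)$, so \eqref{3-thm-2.1} is precisely this conclusion with $K=2(\tilde\lambda_0-\tilde V(0))$, since then $-2K=4(\tilde V(0)-\tilde\lambda_0)$. The feature that makes the bounded-domain machinery collapse to the closed-manifold one is that $e^V=\phi_0^2$ vanishes on $\partial D$: the boundary term $2\int_{\partial D}e^V\Phi''(f)\II(\nabla f,\nabla f)\,\d\A$ in \eqref{3-thm-1.2} disappears, just as the boundary integral is absent from the integration by parts formula \eqref{3-IBP}. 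Consequently I would follow verbatim the sufficiency argument of Theorems \ref{equivalence} and \ref{3-thm-1}: establish the integral form condition \eqref{3-thm-1.2} with $m=\infty$, obtain $-\frac{\d}{\d t}q_\Phi(P_tf)\ge 2Kq_\Phi(P_tf)$, and integrate this differential inequality to reach \eqref{3-thm-2.1}.

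It then remains to prove the integral condition, which I would reduce to a single pointwise curvature estimate. Because $-1/\Phi''$ is convex, the term $-(1/\Phi'')''(f)\big(\Phi''(f)\Gamma(f)\big)^2$ is nonnegative and may be dropped, so it suffices to bound $\int_D \Gamma_2(\Phi'(f))/\Phi''(f)\,\d\mu$ from below by $Kq_\Phi(f)$. On $\R^n$ the Ricci tensor vanishes, so $\Gamma_2(g)=\|\Hess_g\|_{HS}^2-\Hess_V(\nabla g,\nabla g)\ge-\Hess_V(\nabla g,\nabla g)$; taking $g=\Phi'(f)$, using $\nabla\Phi'(f)=\Phi''(f)\nabla f$ and $V=2\log\phi_0$ gives
\[
\frac{\Gamma_2(\Phi'(f))}{\Phi''(f)}\ge-\Phi''(f)\,\Hess_V(\nabla f,\nabla f)=-2\,\Phi''(f)\,\Hess_{\log\phi_0}(\nabla f,\nabla f).
\]
Since $q_\Phi(f)=\int_D\Phi''(f)|\nabla f|^2\,\d\mu$ and $K=2(\tilde\lambda_0-\tilde V(0))$, the target inequality follows at once from the pointwise bound
\[
\Hess_{\log\phi_0}\le(\tilde V(0)-\tilde\lambda_0)\,\Id\qquad\text{on }D.
\]

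The crux, and the step I expect to require the real work, is establishing this one-sided Hessian bound; everything else is bookkeeping on top of Theorems \ref{equivalence} and \ref{3-thm-1}. For this I would invoke the Andrews--Clutterbuck estimate on the modulus of concavity of $\log\phi_0$ (\cite[Theorem 1.5]{Andrews}, \cite[Theorem 2.11]{Gong}): with $\tilde V$ the modulus of concavity produced by the even one-dimensional model, one has
\[
\Big\langle \nabla\log\phi_0(x)-\nabla\log\phi_0(y),\frac{x-y}{|x-y|}\Big\rangle\le 2\,\tilde V'\Big(\frac{|x-y|}{2}\Big),\qquad x\ne y\in\bar D.
\]
Letting $y\to x$ along an arbitrary unit vector $e$ and using that $\tilde V$ is even (hence $\tilde V'(0)=0$) produces a pointwise estimate $\Hess_{\log\phi_0}(e,e)\le c$ with a constant $c$ read off from the second-order behaviour of the one-dimensional comparison function at the centre of the interval. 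The final---and most delicate---point is to identify this constant as $\tilde V(0)-\tilde\lambda_0$, which is where the Dirichlet eigenvalue equation $-\tilde\phi_0''+\tilde U\tilde\phi_0=\tilde\lambda_0\tilde\phi_0$ for the one-dimensional ground state $\tilde\phi_0$ of $-\frac{\d^2}{\d t^2}+\tilde U$ enters, together with the precise definition of $\tilde V$ in terms of $\tilde\phi_0$. Once $c=\tilde V(0)-\tilde\lambda_0$ is secured, the reductions above yield \eqref{3-thm-1.2} with the stated $K$, and solving the differential inequality delivers \eqref{3-thm-2.1}.
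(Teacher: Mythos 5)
Your proposal follows essentially the same route as the paper's proof: the boundary term in \eqref{3-thm-1.2} vanishes because $\phi_0|_{\partial D}=0$, the convexity of $-1/\Phi''$ reduces everything to bounding $\int_D \Gamma_2(\Phi'(f))/\Phi''(f)\,\d\mu$ from below, and the Andrews--Clutterbuck estimate, differentiated at coincident points and combined with the evenness of $\tilde\phi_0$ (so $\tilde\phi_0'(0)=0$) and the eigen-equation $-\tilde\phi_0''+\tilde U\tilde\phi_0=\tilde\lambda_0\tilde\phi_0$ at $0$, gives exactly $\Hess_{\log\phi_0}\leq(\log\tilde\phi_0)''(0)=\tilde U(0)-\tilde\lambda_0$, hence the pointwise condition \eqref{CDC} with $K=2(\tilde\lambda_0-\tilde U(0))$ and $m=\infty$, from which the differential inequality integrates to \eqref{3-thm-2.1}. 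The one constant you left to be identified comes out as $\tilde U(0)-\tilde\lambda_0$, not $\tilde V(0)-\tilde\lambda_0$ with $\tilde V=\log\tilde\phi_0$; the ``$\tilde V(0)$'' in the statement is a typo for $\tilde U(0)$, and your outline otherwise coincides with the paper's argument.
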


\begin{proof}
The second order differential operator is now given by $L=\Delta+2\nabla\log\phi_0\cdot \nabla$,
thus we have
  $$\Gamma(f)=|\nabla f|^2\quad \mbox{and}\quad \Gamma_2(f)=\|\Hess_f\|_{HS}^2
  -2\,\Hess_{\log\phi_0}(\nabla f,\nabla f).$$
Let $\tilde\phi_0$ be the eigenfunction of $-\frac{\d^2}{\d t^2}+\tilde U$
corresponding to $\tilde\lambda_0$ which is strictly positive
on the open interval $(-\textup{diam}(D)/2,\textup{diam}(D)/2)$. Since $\tilde U$ is even,
it is easy to show that $\tilde\phi_0$ is also even, hence $\tilde\phi'_0(0) = 0$.
By \cite[Theorem 1.5]{Andrews}, we know that $\log\tilde\phi_0$ is a modulus of
concavity of $\log\phi_0$, that is, for all $x,y\in D$ with $x\neq y$,
  $$\Big\<\nabla \log\phi_0(x)-\nabla \log\phi_0(y),\frac{x-y}{|x-y|}\Big>
  \leq 2 (\log\tilde\phi_0)'\bigg(\frac{|x-y|}2\bigg).$$
From this it is easy to show that
  $$\Hess_{\log\phi_0}(x)\leq (\log\tilde\phi_0)''(0)=\frac{\tilde\phi_0''(0)}{\tilde\phi_0(0)},
  \quad \mbox{for all } x\in D.$$
Using the eigen-equation $-\tilde\phi_0''+\tilde U \tilde\phi_0=\tilde\lambda_0 \tilde\phi_0$, we have
$\tilde\phi_0''(0)=(\tilde U(0)-\tilde\lambda_0)\tilde\phi_0(0)$. Therefore
  $$\Hess_{\log\phi_0}(x)\leq \tilde U(0)-\tilde\lambda_0.$$
By the expressions of $\Gamma$ and $\Gamma_2$, we obtain
  $$\Gamma_2(f)\geq  2\big(\tilde\lambda_0-\tilde U(0)\big)\Gamma(f),
  \quad \mbox{for all } f\in C^\infty(\bar D).$$
That is, the curvature-dimension condition \eqref{CDC} holds with $K=2\big(\tilde\lambda_0-\tilde U(0)\big)$
and $m=\infty$. Therefore, the same argument as Proposition \ref{prop-1} implies that \eqref{3-thm-1.2}
holds with the same $K$ and $m$, and the term involving the integral on the boundary $\partial D$ vanishes.
By Theorem \ref{3-thm-1}, we obtain the desired estimate.
\end{proof}

As mentioned in Remark \ref{remark-1}, if $\tilde\lambda_0>\tilde U(0)$, then by integrating
\eqref{3-thm-2.1} from $t=0$ to $\infty$, we obtain the $\Phi$-Sobolev inequality for the measure
$\d\mu=\phi_0^2\,\d x$ (see \cite[Theorem 2.10]{Gong} for a similar result with $\d\mu=\phi_0\,\d x$).
In the special case of a convex potential $U$, the constant in the estimate
\eqref{3-thm-2.1} on the rate of change of entropy is explicit.

\begin{corollary}
Assume that the potential $U\in C^1(\bar D)$ is convex. Then we have $q_\Phi(P_t f)\leq
e^{-4t\pi^2/\textup{diam}(D)^2}q_\Phi(f)$ for all $f\in C^\infty(\bar D,\I)$.
\end{corollary}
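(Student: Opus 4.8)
The plan is to obtain this as an immediate specialization of Theorem \ref{3-thm-2}; the only work is to exhibit the right modulus of convexity and to compute the associated one-dimensional Dirichlet eigenvalue explicitly.

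First I would observe that when $U$ is convex the constant function $\tilde U\equiv 0$ serves as a modulus of convexity. Indeed, convexity of $U$ gives $\<\nabla U(x)-\nabla U(y),x-y\>\geq 0$ for all $x,y\in\bar D$, so dividing by $|x-y|>0$ yields
  $$\Big\<\nabla U(x)-\nabla U(y),\frac{x-y}{|x-y|}\Big>\geq 0=2\tilde U'\Big(\frac{|x-y|}2\Big),$$
which is exactly the defining inequality of a modulus of convexity with $\tilde U'\equiv 0$. Moreover $\tilde U\equiv 0$ is trivially $C^1$ and even, so it meets the hypotheses of Theorem \ref{3-thm-2}.

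Next I would evaluate $\tilde\lambda_0$ for this choice. With $\tilde U\equiv 0$ the one-dimensional Schr\"odinger operator $-\frac{\d^2}{\d t^2}+\tilde U$ reduces to $-\frac{\d^2}{\d t^2}$ on the interval $[-\textup{diam}(D)/2,\textup{diam}(D)/2]$, whose length is $\textup{diam}(D)$. Its first Dirichlet eigenfunction is $\tilde\phi_0(t)=\cos\big(\pi t/\textup{diam}(D)\big)$, which is even as required in the proof of Theorem \ref{3-thm-2}, with eigenvalue $\tilde\lambda_0=\pi^2/\textup{diam}(D)^2$. Since also $\tilde U(0)=0$, substituting these two values into the estimate \eqref{3-thm-2.1} gives
  $$q_\Phi(P_t f)\leq e^{4t(\tilde U(0)-\tilde\lambda_0)}q_\Phi(f)=e^{-4t\pi^2/\textup{diam}(D)^2}q_\Phi(f),$$
which is precisely the claimed bound.

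There is no genuine obstacle here; the statement is a direct corollary. The only points requiring care are verifying that the zero function is an admissible (even, $C^1$) modulus of convexity under convexity of $U$, and correctly locating the first Dirichlet eigenvalue of the free one-dimensional Laplacian on an interval of length $\textup{diam}(D)$, both of which are routine.
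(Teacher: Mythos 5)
Your proposal is correct and follows essentially the same route as the paper's own proof: take $\tilde U\equiv 0$ as the modulus of convexity, identify $\tilde\lambda_0=\pi^2/\textup{diam}(D)^2$ as the first Dirichlet eigenvalue of $-\frac{\d^2}{\d t^2}$ on $[-\textup{diam}(D)/2,\textup{diam}(D)/2]$, and substitute into Theorem \ref{3-thm-2}. You merely make explicit the routine verifications (the defining inequality for the zero modulus and the even eigenfunction $\cos(\pi t/\textup{diam}(D))$) that the paper leaves implicit, and you correctly read the exponent in \eqref{3-thm-2.1} as $4t(\tilde U(0)-\tilde\lambda_0)$ despite the paper's typographical slip $\tilde V(0)$.
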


\begin{proof}
Since $U$ is convex, its modulus of convexity is simply given by $\tilde U\equiv 0$. The first Dirichlet
eigenvalue of the differential operator $-\frac{\d^2}{\d t^2}$ on the interval
$[-\textup{diam}(D)/2,\textup{diam}(D)/2]$ is $\tilde\lambda_0=\pi^2/\textup{diam}(D)^2$. Then the estimate
follows from Theorem \ref{3-thm-2}.
\end{proof}

\noindent\textbf{Acknowledgements.} The author is very grateful to Professors Liming Wu for helpful discussions,
and to Bin Qian for his suggestion of proving the necessity part of Theorem \ref{equivalence} by using
the identity \eqref{2-lem.1}, which simplifies the argument.

\end{document}